\documentclass{amsart}

\usepackage{amsmath}
\usepackage{amscd}
\usepackage{verbatim}
\usepackage{amsfonts}
\usepackage{amsthm}
\usepackage{amssymb}
\usepackage{graphicx}
\usepackage{epsfig}
\usepackage[cmtip,all]{xy}
\usepackage{url}
\usepackage{diagrams}

\newtheorem{theorem}{Theorem}[section]

\newtheorem{corollary}[theorem]{Corollary}
\newtheorem{proposition}[theorem]{Proposition}

\theoremstyle{definition}
\newtheorem{definition}[theorem]{Definition}
\newtheorem{example}[theorem]{Example}

\theoremstyle{remark}
\newtheorem{remark}[theorem]{Remark}

\numberwithin{equation}{section}

\begin{document}

\title{Extended quandle spaces and shadow homotopy invariants of classical links}
\author{SEUNG YEOP YANG}

\address{}
\date{}

\maketitle
\markboth{S. Y. Yang}{\hfil{\sc Extended quandle spaces and shadow homotopy invariants of classical links}\hfil}


\begin{abstract}
In 1993, Fenn, Rourke and Sanderson introduced rack spaces and rack homotopy invariants, and modifications to quandle spaces and quandle homotopy invariants were introduced by Nosaka in 2011. In this paper, we define the Cayley-type graph and the extended quandle space of a quandle in analogy to rack and quandle spaces. Moreover, we construct the shadow homotopy invariant of a classical link and prove that the shadow homotopy invariant is equal to the quandle homotopy invariant multiplied by the order of a quandle.
\end{abstract}

\section{Introduction}

\subsection{Preliminaries}
The algebraic structure $(X,*)$ with a set $X$ and a binary operation $*:X \times X \rightarrow X$ is said to be a \emph{magma}. Let us consider the following properties of $*:$
\begin{enumerate}
  \item (Right self-distributivity) For any $a,b,c \in X,$ $(a*b)*c=(a*c)*(b*c).$
  \item (Invertibility) For each $b \in X$, the map $*_{b}:X \rightarrow X$ defined by $*_{b}(x)=x*b$ is invertible.
  \item (Idempotency) For any $a \in X,$ $a*a=a.$
\end{enumerate}
A magma is called a \emph{right distributive structure} or simply \emph{RDS} if the binary operation satisfies the right self-distributive property. If the binary operation satisfies right self-distributivity and invertibility, then the magma is said to be a \emph{rack} \cite{CW59,FenRou92}. A \emph{quandle} (introduced by Joyce \cite{Joy82} and Matveev \cite{Mat82}) is a magma that satisfies all three properties above. The three properties are motivated by the three Reidemeister moves, and quandles can be used to classify classical knots. See \cite{Joy82} and \cite{Mat82} for details.

\begin{example}
The following are some examples of racks and quandles.
\begin{enumerate}
  \item The residues modulo $n,$ $C_{n}=\{0,1,\ldots,n-1\}$ with the operation $i*j=i+1$ mod $n$ for all $i,j \in C_{n}$ is called the \emph{cyclic rack} of order $n.$
  \item A set $X$ with the binary operation $*_{0}$ satisfying $a *_{0} b = a$ for any $a,b \in X$ is said to be a \emph{trivial quandle}.
  \item The cyclic group $\mathbb{Z}_{n}$ of order $n$ with the operation $i*j=2j-i$ for all $i,j \in \mathbb{Z}_{n}$ forms a quandle denoted by $R_{n},$ and it is called a \emph{dihedral quandle} of order $n.$
  \item A module $M$ over the Laurent polynomial ring $\mathbb{Z}[t^{\pm1}]$ with the operation $a * b = ta + (1-t)b$ forms a quandle structure called an \emph{Alexander quandle}.
\end{enumerate}
\end{example}

A map $f:X \rightarrow Y$ between two quandles $(X,*)$ and $(Y,*')$ is called a \emph{quandle homomorphism} if $f(a * b)=f(a)*'f(b)$ for all $a, b \in X$. A \emph{quandle isomorphism} is a bijective quandle homomorphism. A quandle isomorphism from a quandle $X$ onto itself is said to be a \emph{quandle automorphism}. Notice that the map $*_{b}$ in the second axiom of a quandle is a quandle automorphism. For a quandle $X,$ a \emph{quandle inner automorphism group} denoted by $\textrm{Inn}(X)$ is a subgroup of the quandle automorphism group $\textrm{Aut}(X)$ generated by the maps $*_{b}$ for all $b \in X.$ If the map $i:X \rightarrow \textrm{Inn}(X)$ defined by $i(x)=*_{x}$ is injective, then a quandle $X$ is called \emph{faithful}. A quandle $X$ is said to be \emph{connected} (or \emph{homogeneous}, respectively) if the action of $\textrm{Inn}(X)$ (or $\textrm{Aut}(X)$, respectively) on $X$ is transitive.

Let $D_{L}$ be an oriented link diagram of an oriented classical link $L,$ and $\mathcal{R}$ be the set of all arcs of $D_{L}.$ Given a quandle $X,$ the map $\mathcal{C}:\mathcal{R} \rightarrow X$ such that the relation depicted in Figure \ref{quandleshadowcoloring}(i) holds at each crossing of $D_{L}$ is said to be a \emph{quandle coloring} of $D_{L}$ by $X.$ In analogy to the Wirtinger presentation of the link group $\pi_{1}(\mathbb{R}^{3} \setminus L)$ for an oriented link $L,$ the link quandle $Q_{L}$ was introduced in \cite{Joy82} and \cite{Mat82}. A quandle coloring of $D_{L}$ by $X$ can be considered as a quandle homomorphism from $Q_{L}$ to $X$ because there is a one-to-one correspondence between two sets $\textrm{Hom}(Q_{L},X)$ and $\textrm{Col}_{X}(D_{L})$ where $\textrm{Col}_{X}(D_{L})$ is the set of quandle colorings of $D_{L}$ by $X$ (see \cite{Joy82} for details). Every quandle coloring can be extended to a shadow coloring introduced in \cite{FenRouSan04} and \cite{CarKamSai01}. Let $\widetilde{\mathcal{R}}$ denote the set of arcs in $D_{L}$ and regions separated by immersed plane curves of $D_{L}$ (i.e. without height relation information). A \emph{shadow coloring} of a link diagram $D_{L}$ by $X$ is a map $\widetilde{\mathcal{C}}:\widetilde{\mathcal{R}} \rightarrow X$ satisfying the relation depicted in Figure \ref{quandleshadowcoloring}(ii) holds at each crossing of $D_{L}.$ We denote the set of shadow colorings of a link diagram $D_{L}$ by a given quandle $X$ by $\textrm{SCol}_{X}(D_{L}).$ It is well-known that cardinalities $|\textrm{Col}_{X}(D_{L})|$ and $|\textrm{SCol}_{X}(D_{L})|$ do not depend on the choice of link diagram that means they are link invariants. We denote them by $|\textrm{Col}_{X}(L)|$ and $|\textrm{SCol}_{X}(L)|,$ respectively. Note that if $X$ is finite, then $|\textrm{SCol}_{X}(D_{L})|=|X||\textrm{Col}_{X}(D_{L})|.$ More generally, the choice of color for any region of $\mathbb{R}^{2}\setminus D_{L}$ yields uniquely the shadow extension. See \cite{CarKamSai04} for further details.

\begin{figure}[h]
\centerline{{\psfig{figure=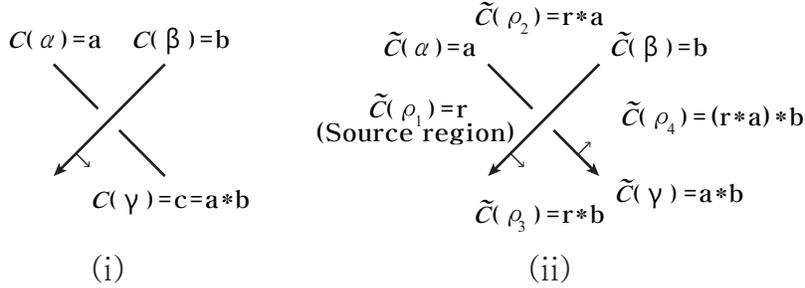,height=3.9cm}}}
\caption{Quandle coloring and shadow coloring relations at each crossing.}
\label{quandleshadowcoloring}
\end{figure}

\subsection{Historical background}
The first homology theory, called rack homology, using right distributive structures and its geometric realizations, called rack space and extended rack space, were introduced by Fenn, Rourke and Sanderson \cite{FenRouSan93,FenRouSan95}. The recent paper by Fenn \cite{Fen12} states: ``Unusually in the history of mathematics, the discovery of the homology and classifying space of a rack can be precisely dated to 2 April 1990." Przytycki \cite{Prz11,Prz15} constructed one-term and multi-term distributive homology theories as generalizations of Fenn, Rourke and Sanderson's studies. The rack homology theory was modified to so-called quandle homology by Carter, Jelsovsky, Kamada, Langford and Saito\cite{CarJelKamLanSai03} in order to define cocycle invariants of classical knots. Later, Nosaka \cite{Nos11} constructed quandle spaces\footnote{Homotopy groups of rack spaces and quandle spaces were studied in \cite{FenRouSan04,FenRouSan07} and \cite{Nos11,Nos13}, respectively.} and quandle homotopy invariants of classical links by modifying rack spaces and rack homotopy invariants of Fenn, Rourke and Sanderson \cite{FenRouSan93}.

For a quandle $X,$ we construct the Cayley-type graph of $X.$ In analogy to the quandle space, we also construct the extended quandle space of $X$ modifying the extended rack space in \cite{FenRouSan95}. The extended quandle space is a geometric realization of quandle homology introduced in \cite{CarJelKamLanSai03}. More precisely, it is a geometric realization of the pre-cubic set related to quandle homology. Furthermore, we define the shadow homotopy invariant of a classical link by using the extended quandle space (or the action quandle space introduced in \cite{Nos13}) and prove that for a finite connected quandle $X$ the shadow homotopy invariant is equal to the quandle homotopy invariant multiplied by $|X|$ in the integral group ring $\mathbb{Z}[\pi_{2}(B^{Q}X)]$ (see Section \ref{Section 3}); compare with a classical observation that $|\textrm{SCol}_{X}(L)|=|X||\textrm{Col}_{X}(L)|$ for a finite quandle $X.$

\section{The geometric realization of a distributive structure homology}

A \emph{pre-simplicial set}\footnote{Eilenberg and Zilber \cite{EZ50} introduced the concept of a simplicial set under the name \emph{complete semi-simplicial complex}, and their semi-simplicial complex is now usually called a \emph{pre-simplicial set} \cite{Lod98,May67}. See \cite{Prz15} for details.} $\mathcal{X}=(X_{n},d_{i})$ is a collection of sets $X_{n}$ $(n \geq 0)$ together with face maps $d_{i}:=d_{i,n}:X_{n} \rightarrow X_{n-1}$ $(0 \leq i \leq n)$ satisfying $d_{i}d_{j}=d_{j-1}d_{i}$ for $i < j.$ Let $C_{n}=\mathbb{Z}X_{n}$ be the free abelian group generated by elements of $X_{n}$ and let $\partial_{n}=\sum\limits_{i=0}^{n}(-1)^{i}d_{i}.$ Then $(C_{n},\partial_{n})$ forms a chain complex, so we can define homology groups of it. An important example of the above construction is group homology.

The geometric realization of a pre-simplicial set can be obtained by gluing together standard simplices with an instruction provided by the pre-simplicial set. Let $\triangle^{n}=\{(t_{0},\ldots,t_{n}) \in \mathbb{R}^{n+1} \mid \sum\limits_{i=0}^{n}t_{i}=1, t_{i} \geq 0\}$ be the standard $n$-simplex. For each $0 \leq i \leq n,$ we consider the coface map $d^{i}:=d^{i,n}:\triangle^{n-1} \rightarrow \triangle^{n}$ defined by $d^{i}(t_{0}, \ldots, t_{n-1})=(t_{0},\ldots ,t_{i-1},0,t_{i},\ldots,t_{n-1}).$ They satisfy relations, $d^{i}d^{j-1}=d^{j}d^{i}$ for $0 \leq i < j \leq n,$ and the geometric realization $|\mathcal{X}|$ of $\mathcal{X}$ is defined as the quotient of the disjoint union $\coprod\limits_{n}(X_{n} \times \triangle^{n})$ by $(\textbf{x}, d^{i}(\textbf{t})) \sim (d_{i}(\textbf{x}),\textbf{t})$ where $X_{n}$ has discrete topology, $\textbf{x} \in X_{n}$ and $\textbf{t} \in \triangle^{n-1}.$ Note that $|\mathcal{X}|$ is a CW-complex with one $n$-cell for each element of $X_{n}.$

A pre-cubic set\footnote{Jean-Pierre Serre was the first to consider cubic category in his PhD thesis \cite{Ser51}. Extensive review of cubic category can be found in \cite{BHS11}.} and its geometric realization can be defined similarly. We review the theory of pre-cubic sets and geometric realizations based on \cite{FenRouSan95}. A \emph{pre-cubic set} $\mathcal{X}=(X_{n},d_{i}^{\varepsilon})$ consists of a collection of sets $X_{n}$ for $n \geq 0$ and face maps $d_{i}^{\varepsilon}:=d_{i,n}^{\varepsilon}:X_{n} \rightarrow X_{n-1}$ for $1 \leq i \leq n$ and $\varepsilon \in \{0,1\}$ satisfying $d_{i}^{\varepsilon}d_{j}^{\delta}=d_{j-1}^{\delta}d_{i}^{\varepsilon}$ if $i < j$ for $\delta,\varepsilon \in \{0,1\}.$ We let $C_{n}=\mathbb{Z}X_{n}$ and $\partial_{n}=\sum\limits_{i=1}^{n}(-1)^{i+1}(d_{i}^{0}-d_{i}^{1})$ in order to obtain chain complex $(C_{n},\partial_{n})$ and its homology groups.

The geometric realization $|\mathcal{X}|$ of a pre-cubic set $\mathcal{X}$ is a CW-complex defined as the quotient of the disjoint union $\coprod\limits_{n}(X_{n} \times I^{n})$ by $(\textbf{x}, d^{i}_{\varepsilon}(\textbf{t})) \sim (d_{i}^{\varepsilon}(\textbf{x}),\textbf{t}),$ where $I^{n}=[0,1]^{n} \subset \mathbb{R}^{n}$ $(n \geq 0)$ and $d^{i}_{\varepsilon}:=d^{i,n}_{\varepsilon}:I^{n-1} \rightarrow I^{n}$ are maps defined by $d^{i}_{\varepsilon}(t_{1},\ldots,t_{n-1})=(t_{1},\ldots,t_{i-1},\varepsilon,t_{i},\ldots,t_{n-1})$ for $1 \leq i \leq n.$ Notice that the maps $d^{i}_{\varepsilon}$ satisfy the following relations:
$d^{i}_{\varepsilon}d^{j-1}_{\delta}=d^{j}_{\delta}d^{i}_{\varepsilon}, ~ 1 \leq i < j \leq n, ~ \delta, \varepsilon \in \{0,1\}.$

\subsection{Cayley-type graphs and extended quandle spaces of quandles}

In this section, we construct extended quandle spaces. We modify extended rack spaces introduced in \cite{FenRouSan95} in analogy to the way quandle spaces in \cite{Nos11} where obtained from rack spaces in \cite{FenRouSan93}. Since extended rack and quandle spaces are geometric realizations of homology theories of racks in \cite{FenRouSan95} and quandles in \cite{CarJelKamLanSai03}, respectively, we first review definitions of rack and quandle homologies based on \cite{CarKamSai04}.

Let $C_{n}^{R}(X)$ be the free abelian group generated by n-tuples $(x_{1}, \ldots ,x_{n})$ of elements of a rack $X$, i.e. $C_{n}^{R}(X)=\mathbb{Z}X^{n}=(\mathbb{Z}X)^{\otimes n}$. Define a boundary homomorphism $\partial_{n}:C_{n}^{R}(X) \rightarrow C_{n-1}^{R}(X)$ by for $n \geq 2$
$$\partial_{n} (x_{1}, \ldots , x_{n})=\sum\limits_{i=2}^{n}(-1)^{i}(d_{i}^{(*_{0})}-d_{i}^{(*)})(x_{1}, \ldots , x_{n})$$
where for $2 \leq i \leq n,$ $d_{i}^{(*_{0})}(x_{1}, \ldots , x_{n})=(x_{1},\ldots,x_{i-1},x_{i+1},\ldots,x_{n})$ and
$$d_{i}^{(*)}(x_{1}, \ldots , x_{n})=(x_{1}*x_{i},\ldots,x_{i-1}*x_{i},x_{i+1},\ldots,x_{n}),$$
and $\partial_{n}=0$ for $n \leq 1.$ Then $(C_{n}^{R}(X),\partial_{n})$ forms a chain complex, and it is called the \emph{rack chain complex} of $X.$

For a quandle $X,$ consider the subgroup $C_{n}^{D}(X)$ of $C_{n}^{R}(X)$ generated by n-tuples $(x_{1}, \ldots ,x_{n})$ of elements of $X$ with $x_{i}=x_{i+1}$ for some $i\in \{1,\ldots,n-1\}$ if $n \geq 2$, otherwise let $C_{n}^{D}(X)=0.$ Then $(C_{n}^{D}(X),\partial_{n})$ is a sub-chain complex of the rack chain complex $(C_{n}^{R}(X),\partial_{n}),$ so the quotient chain complex $(C_{n}^{Q}(X)=C_{n}^{R}(X)/C_{n}^{D}(X),\partial_{n}^{'})$ can be obtained where $\partial_{n}^{'}$ is the induced homomorphism. Hereafter, we denote all boundary maps by $\partial_{n}.$

For an abelian group $G,$ we define the chain and cochain complexes
$$C_{*}^{W}(X;G)=C_{*}^{W}(X) \otimes G, ~ \partial = \partial \otimes \textrm{Id};$$
$$C^{*}_{W}(X;G)=\textrm{Hom}(C_{*}^{W}(X), G), ~ \delta = \textrm{Hom}(\partial, \textrm{Id})$$
for W=R, D and Q. Then the $n$th \emph{rack, degenerate and quandle homology groups} and the $n$th \emph{rack, degenerate and quandle cohomology groups} of a rack / quandle $X$ with coefficient group $G$ are respectively defined as
$$H_{n}^{W}(X;G)=H_{n}(C_{*}^{W}(X;G)),~ H^{n}_{W}(X;G)=H^{n}(C^{*}_{W}(X;G))$$
for W=R, D and Q.

\begin{definition}\cite{FenRouSan93,FenRouSan95}
Let $X$ be a rack, and let $d_{i}^{(*_{0})},d_{i}^{(*)}$ be the face maps defined for $2 \leq i \leq n$ in the definition of the rack chain complex of $X.$
\begin{enumerate}
  \item The \emph{rack space} of $X,$ denoted by $BX,$ is the geometric realization of the pre-cubic set $(X_{n},d_{i}^{0},d_{i}^{1})=(X^{n},d_{i}^{(*_{0})},d_{i}^{(*)})$ where $X^{0}$ is a singleton set and $d_{1}^{(*_{0})}(x_{1}, \ldots , x_{n})=(x_{2},\ldots,x_{n})=d_{1}^{(*)}(x_{1}, \ldots , x_{n}).$
  \item The \emph{extended rack space} of $X,$ denoted by $B_{X}X,$ is the geometric realization of the pre-cubic set $(X_{n},d_{i}^{0},d_{i}^{1})=(X^{n+1},d_{i+1}^{(*_{0})},d_{i+1}^{(*)}).$
\end{enumerate}
\end{definition}

\begin{figure}[h]
\centerline{{\psfig{figure=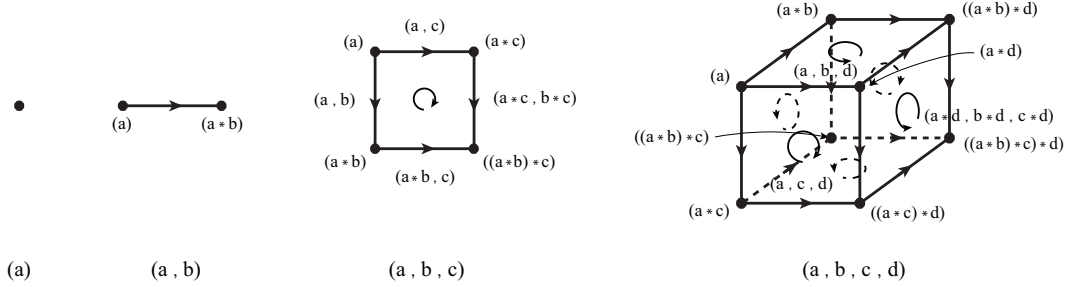,height=5.2cm}}}
\caption{Low-dimensional cells of an extended rack space.}
\label{extendedrackspace}
\end{figure}

For a quandle $X,$ Nosaka \cite{Nos11} constructed the quandle space $B^{Q}X$ of $X$ from the rack space $BX$ of $X$ by attaching extra cells. Extended quandle spaces can be obtained from extended rack spaces in a similar way. We consider the subspace $\coprod\limits_{n \geq 1}(\mathcal{D}^{n+1} \times I^{n})$ of $\coprod\limits_{n \geq 0}(X^{n+1} \times I^{n})$ where $\mathcal{D}^{n}=\{(x_{1},\ldots,x_{n}) \in X^{n} \mid x_{i}=x_{i+1} \textrm{~for some~} 1 \leq i \leq n-1\}.$ For each ${\bf x} \in \mathcal{D}^{n+1},$ consider the inclusion map $f_{\bf x}:((\{ {\bf x} \} \times I^{n})/\sim) \hookrightarrow B_{X}X$ where $({\bf x}, d^{i}_{\varepsilon}({\bf t})) \sim (d_{i}^{\varepsilon}({\bf x}),{\bf t}).$ We denote the mapping cone of $f_{\bf x}$ by $C_{f_{\bf x}}.$

\begin{definition}
The \emph{extended quandle space} of a quandle $X,$ denoted by $B_{X}^{Q}X$, is the union of mapping cones $\bigcup\limits_{{\bf x} \in \mathbb{D}} C_{f_{\bf x}}$ where $\mathbb{D}=\bigcup\limits_{k \geq 2}\mathcal{D}^{k}.$
\end{definition}

\begin{remark}
If we restrict $\mathcal{D}^{n}$ in the above construction to be $\mathcal{D}^{n}=\{(x_{1},\ldots,x_{n}) \in X^{n} \mid x_{i}=x_{i+1} \textrm{~for some~} 2 \leq i \leq n-1\}$ (i.e. $\mathcal{D}^{2}=\emptyset$), then the resulting space coincides with the space $BX_{Q}^{X}$ introduced in \cite{Nos13}. We call it the \emph{action quandle space} of a quandle $X.$
\end{remark}

We give a detailed description of the $3$-skeleton of an extended quandle space, denoted by $B_{X}^{Q}X^{3}.$ Let $B_{X}X^{3}$ be the $3$-skeleton of the extended rack space of a quandle $X.$ Each edge $e_{(a,a)}$ labeled by $(a,a)$ of $B_{X}X^{3}$ is a loop. We first glue a $2$-cell $D^{2}$ to $B_{X}X^{3}$ by a homeomorphism $\partial(D^{2}) \rightarrow e_{(a,a)}$ for each $(a,a) \in \mathcal{D}^{2}.$ We denote the space constructed by $\widetilde{B}_{X}X^{3}$ whose $2$-skeleton is the $2$-skeleton of $B_{X}^{Q}X^{3}.$ To build the $3$-skeleton $B_{X}^{Q}X^{3}$ we attach $3$-cells to its $2$-skeleton. We have two cases to consider.\\
\hspace*{10pt}\textbf{Case} $1.$ $(a,b,b) \in \mathcal{D}^{3}.$\\
Each $2$-cell $e_{(a,b,b)}$ labeled by $(a,b,b)$ of $\widetilde{B}_{X}X^{3}$ forms a sphere. We attach a $3$-cell $D^{3}$ to $\widetilde{B}_{X}X^{3}$ via a homeomorphism $\partial(D^{3}) \rightarrow e_{(a,b,b)}$ for each $(a,b,b) \in \mathcal{D}^{3}.$ See Figure \ref{extendedquandlespace}(i).\\
\hspace*{10pt}\textbf{Case} $2.$ $(a,a,b) \in \mathcal{D}^{3}.$\\
Each $2$-cell $e_{(a,a,b)}$ labeled by $(a,a,b)$ of $\widetilde{B}_{X}X^{3}$ is the side of a cylinder, and together with two disks, denoted by $D_{(a,a)}$ and $D_{(a*b,a*b)},$ whose boundaries are $e_{(a,a)}$ and $e_{(a*b,a*b)}$ it forms a cylinder. We attach a $3$-cell $D^{3}$ to $\widetilde{B}_{X}X^{3}$ via a homeomorphism $\partial(D^{3}) \rightarrow e_{(a,a,b)} \cup D_{(a,a)} \cup D_{(a*b,a*b)}$ for each $(a,a,b) \in \mathcal{D}^{3}.$ See Figure \ref{extendedquandlespace}(ii).\\
The space obtained from the above construction is the $3$-skeleton $B_{X}^{Q}X^{3}$ of the extended quandle space of $X.$

\begin{figure}[h]
\centerline{{\psfig{figure=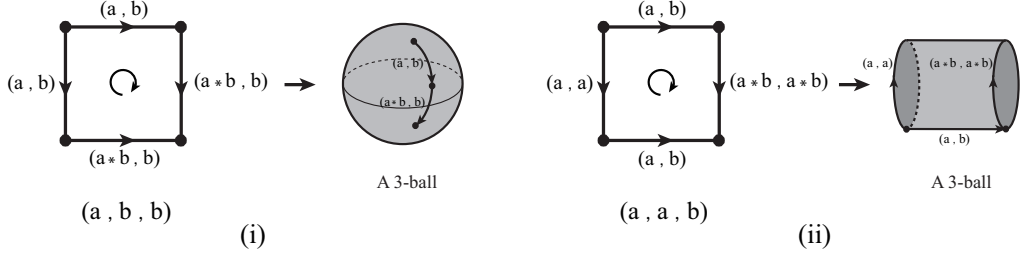,height=3.4cm}}}
\caption{Extra $3$-cells of an extended quandle space.}
\label{extendedquandlespace}
\end{figure}

\begin{remark}
In the above construction, if we attach only a $3$-cell $D^{3}$ to the $3$-skeleton of the extended rack space $B_{X}X^{3}$ for each $(a,b,b) \in \mathcal{D}^{3},$ then the resulting space is the $3$-skeleton of the action quandle space.
\end{remark}

It follows from the above construction that for an abelian group $G,$ the $i$-dimensional rack (quandle, respectively) homology group of $X$ is isomorphic to the $(i-1)$-dimensional homology group of the extended rack (quandle, respectively) space of $X,$ i.e. for each $i \geq 1,$
$$H_{i-1}(B_{X}X;G) \cong H_{i}^{R}(X;G) ,~ H_{i-1}(B_{X}^{Q}X;G) \cong H_{i}^{Q}(X;G).$$

In particular, the $1$-skeleton $B_{X}X^{1}$ of the extended rack space of a rack $X$ is called the \emph{rack graph} of $X.$ For a quandle $X,$ the \emph{quandle graph} of $X,$ denoted by $B_{X}^{Q}X^{1},$ is the graph obtained by deleting loops that labeled by $(a,a)$ for all $a \in X.$

\begin{figure}[h]
\centerline{{\psfig{figure=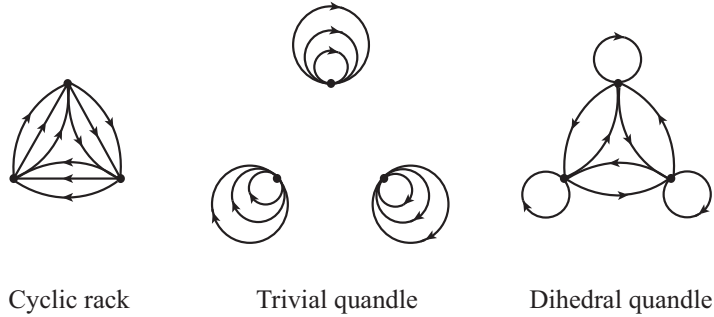,height=4.2cm}}}
\caption{Rack graphs of racks of order $3.$}
\label{extendedquandlegraph}
\end{figure}

Notice that every rack space and quandle space is path-connected, but not every extended rack space and extended quandle space is path-connected.

\begin{proposition}\label{Proposition 2.3}
Let $X$ be a quandle. Then the following are equivalent.
\begin{enumerate}
  \item $X$ is a connected quandle.
  \item The rack graph (or quandle graph) of $X$ is a connected graph.
  \item The extended rack space (or extended quandle space) of $X$ is path-connected.
\end{enumerate}
Moreover, the number of connected components of the rack graph (or quandle graph) of $X$ is equal to $\emph{rank}H_{1}^{R}(X)(=\emph{rank}H_{1}^{Q}(X)).$
\end{proposition}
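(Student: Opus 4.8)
The plan is to identify the connected components of the rack graph with the orbits of the action of $\textrm{Inn}(X)$ on $X$, and then to deduce the remaining statements from standard facts about CW-complexes together with the homology isomorphism recorded above.

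First I would analyze the edges of the rack graph $B_{X}X^{1}$. By the definition of the extended rack space, its $0$-cells are the elements of $X^{1}=X$ and its $1$-cells are labeled by pairs $(a,b)\in X^{2}$; the face maps $d_{1}^{0}=d_{2}^{(*_{0})}$ and $d_{1}^{1}=d_{2}^{(*)}$ send $(a,b)$ to $a$ and to $a*b$, respectively. Thus the edge $e_{(a,b)}$ joins the vertex $a$ to the vertex $a*b=*_{b}(a)$. Traversing this edge forward realizes $a\mapsto *_{b}(a)$, and, using the invertibility axiom, traversing an edge backward realizes $a\mapsto *_{b}^{-1}(a)$. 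Since the maps $*_{b}$ together with their inverses generate the group $\textrm{Inn}(X)$, two vertices lie in the same connected component of the rack graph precisely when they lie in the same orbit of $\textrm{Inn}(X)$; in particular the connected components of the rack graph are exactly these orbits. The quandle graph is obtained by deleting the edges $e_{(a,a)}$, each of which is a loop at $a$ by idempotency ($a*a=a$), so this deletion does not alter the component structure and the quandle graph has the same components. This establishes $(1)\Leftrightarrow(2)$: the graph is connected iff there is a single orbit iff the action of $\textrm{Inn}(X)$ is transitive.

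For $(2)\Leftrightarrow(3)$ I would invoke the standard fact that the set of path components of a CW-complex is in bijection with the set of connected components of its $1$-skeleton, since attaching cells of dimension $\geq 2$ does not change $\pi_{0}$. The rack graph and the quandle graph are by construction the $1$-skeleta of the extended rack space $B_{X}X$ and the extended quandle space $B_{X}^{Q}X$, respectively, so path-connectivity of either space is equivalent to connectivity of the corresponding graph.

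Finally, for the ``moreover'' clause I would use the case $i=1$ of the stated isomorphism, namely $H_{0}(B_{X}X)\cong H_{1}^{R}(X)$. Since $H_{0}$ of a space is free abelian of rank equal to the number of path components, and the number of path components of $B_{X}X$ equals the number of components of the rack graph by the argument above, I obtain that $\textrm{rank}\,H_{1}^{R}(X)$ equals the number of components of the rack graph. As a cross-check one may compute directly: $\partial_{1}=0$ and $\partial_{2}(x_{1},x_{2})=x_{1}-x_{1}*x_{2}$, so $H_{1}^{R}(X)=\mathbb{Z}X/\langle a-a*b\rangle$, which is free abelian on the set of $\textrm{Inn}(X)$-orbits; because $\partial_{2}$ already vanishes on the degenerate generators $(a,a)$ by idempotency, one also gets $H_{1}^{Q}(X)\cong H_{1}^{R}(X)$, giving the parenthetical equality of ranks. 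The only point requiring genuine care is the backward-edge step in the second paragraph, where invertibility is essential to pass from the generating maps $*_{b}$ to the full inner automorphism group; the remaining steps are routine applications of CW-topology and elementary homological algebra.
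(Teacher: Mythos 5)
Your proposal is correct and follows essentially the same route as the paper: edges of the rack graph realize the maps $*_{b}$ so that graph components coincide with $\textrm{Inn}(X)$-orbits, higher-dimensional cells do not affect $\pi_{0}$, and the rank statement follows from $H_{0}(B_{X}X)\cong H_{1}^{R}(X)\cong H_{1}^{Q}(X)$. The only organizational difference is that you prove $(2)\Leftrightarrow(3)$ directly via the $1$-skeleton, whereas the paper closes the cycle with a separate $(3)\Rightarrow(1)$ step using the observation that all vertices of any cell lie in one orbit; your explicit attention to traversing edges backwards (invertibility) is if anything slightly more careful than the paper's phrasing.
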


\begin{proof}
$(1)\Leftrightarrow(2)$ A quandle $X$ is connected if and only if for any $a,b \in X,$ there exist elements $x_{1},\ldots,x_{k} \in X$ such that $(\cdots((a*x_{1})*x_{2})*\cdots)*x_{k}=b.$ It is equivalent to that there exist edges labeled by $(a,x_{1}),(a*x_{1},x_{2}),\ldots,((\cdots((a*x_{1})*x_{2})*\cdots)*x_{k-1},x_{k})$ such that they connect two points $(a)$ and $(b)$ in the rack graph (or quandle graph) of $X.$\\
$(2)\Rightarrow(3)$ It is obvious since the extended rack space (or extended quandle space) of $X$ is a CW-complex and the rack graph of $X$ is its $1$-skeleton.\\
$(3)\Rightarrow(1)$ Consider the action of $\textrm{Inn}(X)$ on $X.$ Notice that for each cell the labels of all vertices of the cell belong to the same orbit (more precisely, they belong to the orbit of the first coordinate of the label of the cell). Therefore, if $X$ is not connected, then the extended rack space (or extended quandle space) of $X$ is not path-connected.

Moreover, since $H_{0}(B_{X}X^{1}) \cong H_{1}^{R}(X) \cong H_{1}^{Q}(X),$ $\textrm{rank}H_{1}^{R}(X)=\textrm{rank}H_{1}^{Q}(X)$ is the number of components of the graph $B_{X}X^{1}.$
\end{proof}

Eisermann \cite{Eis03} showed that the second quandle homology detects the unknot.

\begin{theorem}\cite{Eis03}\label{Theorem 2.4}
Let $K$ be a knot, and let $Q_{K}$ be its knot quandle. If $K$ is trivial, then $H_{2}^{Q}(Q_{K})=0.$ If $K$ is non-trivial, however, then $H_{2}^{Q}(Q_{K})=\mathbb{Z}.$
\end{theorem}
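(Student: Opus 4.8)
The plan is to treat the trivial and non-trivial cases separately, in each case comparing $H_{2}^{Q}(Q_{K})$ with the peripheral structure of the knot group $\pi=\pi_{1}(\mathbb{R}^{3}\setminus K)$. For the trivial knot I would argue by direct computation: the unknot has a crossingless diagram, so by the Wirtinger-type presentation its knot quandle is the free quandle on a single generator, which by idempotency is the one-element trivial quandle $\{\ast\}$. Every $n$-tuple with $n\geq 2$ is then degenerate, so $C_{n}^{Q}(Q_{K})=0$ for $n\geq 2$ and hence $H_{2}^{Q}(Q_{K})=0$. All of the content therefore lies in the non-trivial case, where I must exhibit a class generating an infinite cyclic group and then show that nothing else survives.

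First I would construct a canonical $2$-cycle $\theta_{K}\in C_{2}^{Q}(Q_{K})$ from any diagram $D_{K}$. Color $D_{K}$ tautologically by $Q_{K}$, each arc receiving its own generator, and extend this to a shadow coloring $\widetilde{\mathcal{C}}$ as in Figure \ref{quandleshadowcoloring}(ii); each crossing $\tau$ then contributes a pair $(x_{\tau},y_{\tau})$ of colors with the sign $\varepsilon(\tau)$ of the crossing, and I set $\theta_{K}=\sum_{\tau}\varepsilon(\tau)\,(x_{\tau},y_{\tau})$. The same Reidemeister-move bookkeeping that shows $|\textrm{SCol}_{X}(L)|$ is a link invariant shows that $\partial\theta_{K}=0$ and that the homology class $[\theta_{K}]$ is independent of $D_{K}$; this is precisely the fundamental class against which the shadow cocycle invariants are paired.

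The heart of the argument is to identify $[\theta_{K}]$ with the longitude. I would pass to the associated (adjoint) group of $Q_{K}$, which for a knot quandle is the knot group $\pi$, and compare the quandle chain complex with the bar resolution of $\pi$ through the action of $\textrm{Inn}(Q_{K})$. This comparison produces a natural homomorphism $H_{2}^{Q}(Q_{K})\to\langle m,\ell\rangle$ into the peripheral subgroup $\langle m,\ell\rangle\cong\mathbb{Z}^{2}$ whose image is the cyclic subgroup $\langle\ell\rangle$ generated by the longitude, carrying $[\theta_{K}]$ to $\ell$. For the upper bound I would use a five-term exact sequence relating quandle homology to the group homology $H_{2}(\pi)$ of the adjoint group, together with the classical vanishing $H_{2}(\pi)=0$ for a knot exterior, to conclude that $H_{2}^{Q}(Q_{K})$ is exactly the infinite cyclic group generated by $\ell$. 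Non-triviality is then classical: by Dehn's lemma and the loop theorem (Papakyriakopoulos), the longitude $\ell$ is null-homotopic in $\mathbb{R}^{3}\setminus K$ if and only if $K$ is trivial, so for non-trivial $K$ the element $\ell$ has infinite order and $H_{2}^{Q}(Q_{K})\cong\mathbb{Z}$. I expect the main obstacle to be making the comparison map $H_{2}^{Q}(Q_{K})\to\langle m,\ell\rangle$ precise and proving it is an isomorphism onto $\langle\ell\rangle$; once that identification is in place, the remaining input is either the direct degenerate-complex computation or a citation to classical $3$-manifold topology.
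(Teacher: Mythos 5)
The paper does not prove this statement: Theorem \ref{Theorem 2.4} is quoted verbatim from Eisermann \cite{Eis03}, so there is no in-paper argument to compare against, and your proposal has to be judged against Eisermann's original proof. Your treatment of the trivial case is complete and correct: the knot quandle of the unknot is the free quandle on one generator, which collapses to a singleton $\{a\}$, so every $n$-tuple with $n\geq 2$ lies in $C_{n}^{D}$ as defined in Section 2, giving $C_{n}^{Q}(Q_{K})=0$ for $n\geq 2$ and hence $H_{2}^{Q}(Q_{K})=0$. For the non-trivial case your outline is faithful to Eisermann's actual route: the adjoint group of $Q_{K}$ is the knot group, the tautological coloring yields the orientation $2$-cycle, and the theorem ultimately reduces to the fact (via Dehn's lemma and the loop theorem) that the longitude is trivial in $\pi_{1}(\mathbb{R}^{3}\setminus K)$ if and only if $K$ is unknotted.

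The genuine gap is the step you yourself flag: the claim that the comparison map identifies $H_{2}^{Q}(Q_{K})$ isomorphically with the cyclic subgroup $\langle\ell\rangle$ of the peripheral subgroup. That identification is not a routine consequence of a five-term exact sequence plus $H_{2}(\pi)=0$; it is the entire content of Eisermann's paper, which proceeds by computing $H_{2}^{Q}$ of a connected quandle in terms of the adjoint group and the stabilizer of a base point (roughly, the intersection of the stabilizer of a meridian with the commutator subgroup), and only then specializes to knot quandles where that intersection is $\langle\ell\rangle$. Without constructing that isomorphism, you have an upper-bound heuristic and a lower-bound candidate class but no proof that $[\theta_{K}]$ generates, nor that nothing else survives. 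Two smaller points: extending the tautological coloring to a shadow coloring is unnecessary (and slightly misleading) for building a $2$-cycle, since region colors produce $3$-chains, not $2$-chains; and you should verify that $[\theta_{K}]$ is nonzero in the \emph{quandle} (not just rack) homology, i.e.\ that it does not die after quotienting by the degenerate subcomplex --- this is again supplied only by the explicit isomorphism onto $\langle\ell\rangle$.
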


The following corollary can be obtained from the above theorem.

\begin{corollary}
Let $K$ be a knot, and let $B_{Q_{K}}^{Q}{Q_{K}}^{2}$ be the $2$-skeleton of the extended quandle space of the knot quandle $Q_{K}.$ Then $K$ is the unknot if and only if $\pi_{1}(B_{Q_{K}}^{Q}{Q_{K}}^{2})$ is a perfect group.
\end{corollary}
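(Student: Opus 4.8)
The plan is to connect the topology of the $2$-skeleton $B_{Q_K}^{Q}{Q_K}^{2}$ to the second quandle homology $H_2^{Q}(Q_K)$ via the Hurewicz theorem, and then invoke Theorem \ref{Theorem 2.4}. Recall from the excerpt the dimension shift $H_{i-1}(B_{Q_K}^{Q}{Q_K};G) \cong H_i^{Q}(Q_K;G)$; in particular $H_1(B_{Q_K}^{Q}{Q_K}) \cong H_2^{Q}(Q_K)$. Since homology in degree $\leq 1$ depends only on the $2$-skeleton, this gives $H_1(B_{Q_K}^{Q}{Q_K}^{2}) \cong H_2^{Q}(Q_K)$. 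A group $\pi$ is perfect precisely when its abelianization vanishes, and for the fundamental group of a connected CW-complex the Hurewicz map realizes that abelianization as the first homology. So the heart of the argument is the chain of equivalences: $\pi_1(B_{Q_K}^{Q}{Q_K}^{2})$ is perfect $\Leftrightarrow$ $H_1(B_{Q_K}^{Q}{Q_K}^{2}) = 0$ $\Leftrightarrow$ $H_2^{Q}(Q_K) = 0$ $\Leftrightarrow$ $K$ is the unknot.

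First I would verify path-connectedness so that the Hurewicz theorem applies and $H_1$ genuinely computes the abelianization of $\pi_1$. By Proposition \ref{Proposition 2.3}, the extended quandle space $B_{Q_K}^{Q}{Q_K}$ is path-connected if and only if $Q_K$ is a connected quandle; the knot quandle of any knot is connected (the meridians all lie in a single orbit under $\textrm{Inn}(Q_K)$, since the Wirtinger generators are mutually conjugate), so this hypothesis is satisfied, and connectivity passes to the $2$-skeleton since attaching cells of dimension $\geq 3$ does not change path-connectedness. Next I would record the Hurewicz isomorphism $H_1(B_{Q_K}^{Q}{Q_K}^{2}) \cong \pi_1(B_{Q_K}^{Q}{Q_K}^{2})^{\textrm{ab}}$, from which perfectness of $\pi_1$ is equivalent to the vanishing of $H_1$ of the $2$-skeleton.

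The remaining step is the identification $H_1(B_{Q_K}^{Q}{Q_K}^{2}) \cong H_2^{Q}(Q_K)$. Here I would be careful that the stated isomorphism $H_{i-1}(B_{Q_K}^{Q}{Q_K}) \cong H_i^{Q}(Q_K)$ is an isomorphism of the full-space homology, whereas the fundamental group is that of the $2$-skeleton. The point is that $H_1$ is a $2$-skeletal invariant: the inclusion of the $2$-skeleton into the full space induces an isomorphism on $H_j$ for $j \leq 1$, since the higher cells only affect homology in degrees $\geq 2$ (and the boundaries they kill land in degree $\geq 2$). Thus $H_1(B_{Q_K}^{Q}{Q_K}^{2}) \cong H_1(B_{Q_K}^{Q}{Q_K}) \cong H_2^{Q}(Q_K)$. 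Combining, $\pi_1$ is perfect iff $H_2^{Q}(Q_K)=0$, and by Theorem \ref{Theorem 2.4} this holds iff $K$ is trivial.

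The main obstacle I anticipate is the careful bookkeeping of the skeletal shift: one must confirm that the extra cells attached in the quandle-space construction (the $2$-cells filling the degenerate loops $e_{(a,a)}$ and the $3$-cells of Cases $1$ and $2$) do not disturb $H_1$, and that the dimension-shifting isomorphism is compatible with passing to the $2$-skeleton. Since $H_1$ is insensitive to cells of dimension $\geq 3$ and the degenerate $2$-cells are precisely those accounted for in the quandle (rather than rack) homology, this should go through cleanly, but it is the step that requires genuine verification rather than a formal citation. A subtlety worth a sentence is that $H_2^{Q}(Q_K)$ is either $0$ or $\mathbb{Z}$ by Theorem \ref{Theorem 2.4}, so the dichotomy is sharp and the equivalence is not merely one-directional.
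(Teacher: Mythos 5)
Your proposal is correct and follows essentially the same route as the paper: path-connectedness via Proposition \ref{Proposition 2.3} and connectivity of the knot quandle, the identification $H_{1}(B_{Q_{K}}^{Q}Q_{K}) \cong H_{2}^{Q}(Q_{K})$, Hurewicz, the observation that the relevant invariant is determined by the $2$-skeleton, and Theorem \ref{Theorem 2.4}. The only (immaterial) difference is that the paper transfers $\pi_{1}$ from the $2$-skeleton to the full space before applying Hurewicz, whereas you transfer $H_{1}$ from the full space down to the $2$-skeleton.
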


\begin{proof}
Since every knot quandle is connected, $B_{Q_{K}}^{Q}Q_{K}$ is path-connected by Proposition \ref{Proposition 2.3}. Notice that $H_{1}(B_{Q_{K}}^{Q}Q_{K}) \cong H_{2}^{Q}(Q_{K}).$ Thus $K$ is trivial if and only if $H_{1}(B_{Q_{K}}^{Q}Q_{K})=0$ by Theorem \ref{Theorem 2.4}. Since $\pi_{1}(X)$ depends only on the $2$-skeleton of $X$ for any CW-complex $X,$ $H_{1}(B_{Q_{K}}^{Q}Q_{K})=0$ if and only if $\pi_{1}(B_{Q_{K}}^{Q}{Q_{K}}^{2})$ is a perfect group as desired.
\end{proof}

\section{Shadow homotopy invariants of classical links}\label{Section 3}

The rack homotopy invariant of a framed oriented link was introduced by Fenn, Rourke and Sanderson \cite{FenRouSan93}, and it was modified by Nosaka \cite{Nos11} to construct the quandle homotopy invariant of an oriented link. By using an action quandle space (or an extended quandle space) and a shadow coloring, we obtain the shadow homotopy invariant of an oriented link in a similar manner.

Let $L$ be an oriented link, and let $D_{L}$ be its diagram in $I^{2}=[0,1]\times[0,1].$ See Figure \ref{shadowhomotopyinvariant}. We then construct a decomposition of $I^{2}$ with respect to $D_{L}$ in the following way. We choose a point in each region separated by immersed plane curves of $D_{L},$ except the region, called the \emph{$\infty$-region}, which is adjacent to the boundary $\partial I^{2}$ of $I^{2}.$ If $R_{1}$ and $R_{2}$ are two regions separated by an arc $\alpha,$ then we connect points $p_{1}$ and $p_{2}$ which are corresponding to the regions $R_{1}$ and $R_{2}$ respectively by a directed edge $e$ as depicted in Figure \ref{decomposition}. Especially, if one of $R_{1}$ and $R_{2}$ is the $\infty$-region, say $R_{2}$ is the $\infty$-region without loss of generality, then we connect the point $p_{1}$ to the boundary $\partial I^{2}$ of $I^{2}$ as depicted in Figure \ref{decomposition}.

\begin{figure}[h]
\centerline{{\psfig{figure=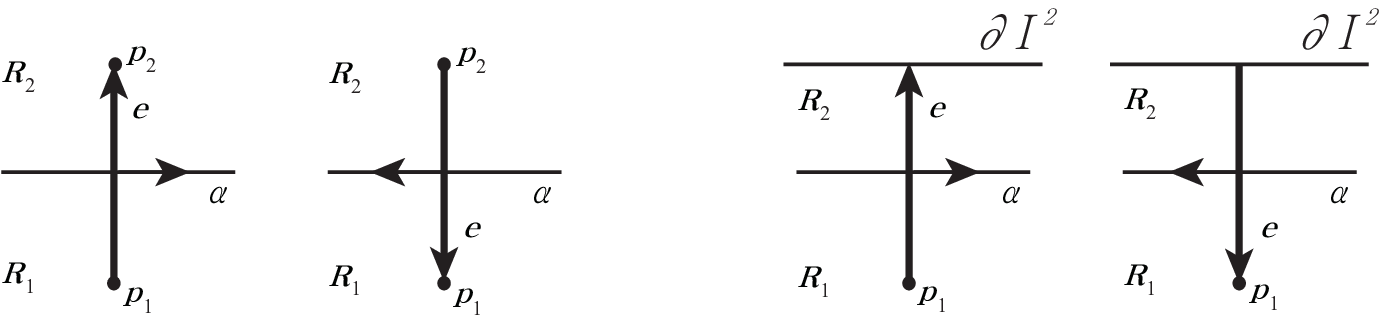,height=2.5cm}}}
\caption{A decomposition of $I^{2}$ with respect to $D_{L}$.}
\label{decomposition}
\end{figure}

Given a shadow coloring $\widetilde{\mathcal{C}}$ of $D_{L}$ by a quandle $X,$ we label a decomposition of $I^{2}$ by $D_{L}$ illustrated above in order to define a map from $I^{2}$ to the action quandle space $BX_{Q}^{X}$ (or the extended quandle space $B_{X}^{Q}X$). We first label the boundary $\partial I^{2}$ of $I^{2}$ by $r$ where $r$ is the color of the $\infty$-region. Each point of the decomposition is labeled by the color of the corresponding region. Each directed edge will be labeled by $(r,a)$ where $r$ is the label of the starting point and $a$ is the color of the corresponding arc of $D_{L}$ (i.e. the label of the end point is $r*a$). For each crossing of $D_{L},$ we label the corresponding polygon in $I^{2}$ by $(r,a,b)$ where $r$ is the label of the point in the source region, $a$ is the color of the under arc adjoining the source region and $b$ is the color of the over arc. We then define a function $\psi_{X}(D_{L};\widetilde{\mathcal{C}}):(I^{2}, \partial I^{2}) \rightarrow (BX_{Q}^{X}, r)$ (or $\psi_{X}(D_{L};\widetilde{\mathcal{C}}):(I^{2}, \partial I^{2}) \rightarrow (B_{X}^{Q}X, r)$) that assigns labeled cells of $I^{2}$ to the corresponding cells in $BX_{Q}^{X}$ (or $B_{X}^{Q}X$). See Figure \ref{shadowhomotopyinvariant} for example.

\begin{figure}[h]
\centerline{{\psfig{figure=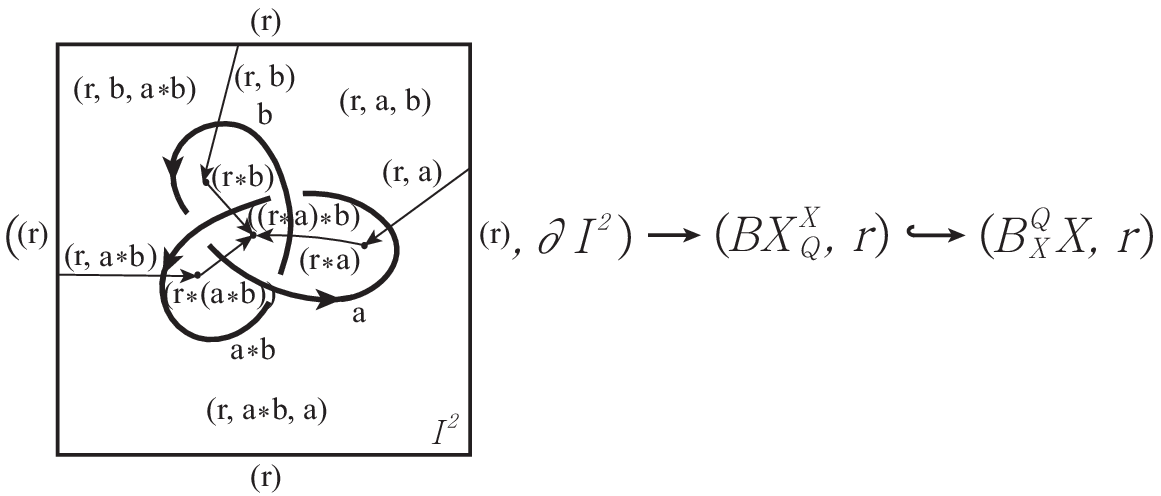,height=4.5cm}}}
\caption{A shadow homotopy invariant of an oriented knot.}
\label{shadowhomotopyinvariant}
\end{figure}

We will show in the following theorem that the homotopy class of $\psi_{X}(D_{L};\widetilde{\mathcal{C}})$ with the base point labeled by $r$ in $\pi_{2}(BX_{Q}^{X},r)$ (or $\pi_{2}(B_{X}^{Q}X,r)$) is invariant under Reidemeister moves by using the idea of Fenn, Rourke, Sanderson \cite{FenRouSan93} and Nosaka \cite{Nos11}.

\begin{theorem}
The homotopy class of $\psi_{X}(D_{L};\widetilde{\mathcal{C}})$ in $\pi_{2}(BX_{Q}^{X},r)$ (or $\pi_{2}(B_{X}^{Q}X,r)$) is invariant under Reidemeister moves.
\end{theorem}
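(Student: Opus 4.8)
The plan is to verify invariance of the homotopy class $[\psi_{X}(D_{L};\widetilde{\mathcal{C}})]$ under each of the three Reidemeister moves separately, following the template established by Fenn--Rourke--Sanderson \cite{FenRouSan93} and Nosaka \cite{Nos11} for the (non-shadow) rack and quandle homotopy invariants. The essential point is that when a diagram $D_{L}$ is changed to $D_{L}'$ by a single Reidemeister move, the maps $\psi_{X}(D_{L};\widetilde{\mathcal{C}})$ and $\psi_{X}(D_{L}';\widetilde{\mathcal{C}}')$ differ only in a small disk $\Delta \subset I^{2}$ surrounding the location of the move, and agree outside $\Delta$. Thus it suffices to exhibit, for each move, an explicit homotopy relative to $\partial I^{2}$ supported in (a neighborhood of) $\Delta$; the base point $r$ (the color of the $\infty$-region) is untouched throughout, so we genuinely work in $\pi_{2}$ based at $r$.

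First I would fix the local model. Each Reidemeister move induces a correspondence between the labeled cells of the decomposition of $I^{2}$ inside $\Delta$ before and after the move, and each labeled cell maps to the cell of $B_{X}^{Q}X$ (or $BX_{Q}^{X}$) with the matching label. The two local pictures assemble into a map $S^{2} \to B_{X}^{Q}X$ (glue the two disks along their common boundary), and the claim for that move is precisely that this sphere is null-homotopic, i.e. bounds in $\pi_{2}$. So for each move I would (i) write down the labels of the region-points, directed edges, and crossing-polygons in $\Delta$ on both sides, using the shadow-coloring rules (endpoint of edge $(r,a)$ is $r*a$, crossing polygon labeled $(r,a,b)$), and (ii) identify the resulting relation among cells of the target space with one of its defining attaching maps. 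The correspondence between colorings $\widetilde{\mathcal{C}}$ and $\widetilde{\mathcal{C}}'$ is forced by the coloring conditions, so there is no freedom to track there.

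The three moves distribute across the structure of the space as follows. For the first Reidemeister move a kink creates an edge of the form $(r,a)$ together with a crossing polygon whose over- and under-arcs coincide, yielding a cell labeled with a repeated coordinate, namely a cell of the degenerate type $\mathcal{D}^{n+1}$; the null-homotopy is exactly the filling provided by the extra degenerate $3$-cells attached in the construction of $B_{X}^{Q}X$ (Case $2$, the cylinder glued along $D_{(a,a)}$ and $D_{(a*b,a*b)}$), which is precisely where the quandle (as opposed to rack) structure is used. For the second Reidemeister move the two new crossings produce a pair of oppositely-oriented crossing polygons, and the boundary map $\partial = \sum (-1)^{i+1}(d_{i}^{0}-d_{i}^{1})$ shows their contributions cancel, giving a sphere filled by a $3$-cell of the extended space. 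For the third Reidemeister move the six crossing polygons on the two sides correspond under the self-distributive relabelling $(a*c,b*c)$ versus $(a,b)$, and the requisite null-homotopy is the standard $3$-cell of the extended rack space $B_{X}X$ coming from right self-distributivity (the geometric face of the identity $(a*b)*c=(a*c)*(b*c)$), which already lives in the $3$-skeleton before any degenerate cells are added.

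I expect the main obstacle to be the first Reidemeister move, for two reasons. First, it is the step that genuinely distinguishes the extended quandle (or action quandle) space from the extended rack space, so one must check carefully that the degenerate $3$-cells attached in Case $2$ of the construction of $B_{X}^{Q}X^{3}$ are exactly the cells needed to cancel the edge $(a,a)$ and the crossing polygon created by the kink; a naive rack-level argument fails here, just as the ordinary rack homotopy invariant is only a framed-link invariant. Second, because a kink changes the framing/writhe locally, I would need to confirm that in the shadow setting the region colors around the kink are constrained so that the created cell is indeed degenerate (repeated coordinate) rather than merely rack-type, so that it can be filled. Once the local pictures are drawn explicitly (as in Figures \ref{extendedquandlespace} and \ref{shadowhomotopyinvariant}) and the attaching maps are read off, the second and third moves are largely the same formal computation as in \cite{Nos11}, with the region-label coordinate $r$ carried along as an extra first coordinate throughout.
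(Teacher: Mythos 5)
Your overall strategy is the same as the paper's: localize each move to a disk, read off the labeled cells on both sides, and fill the resulting sphere using cells of the target space (degenerate $3$-cells for Type~I, cancellation of two identical oppositely-oriented squares for Type~II, the self-distributivity $3$-cell of the extended rack space for Type~III). The Type~II and Type~III steps match the paper's argument, except that for Type~II no $3$-cell is actually needed: the two crossing polygons carry the \emph{same} label with opposite orientations, so the glued sphere factors through a single $2$-cell and is null-homotopic for that elementary reason, rather than by a filling $3$-cell.

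The genuine problem is in your treatment of Type~I, which you correctly single out as the delicate step but then resolve with the wrong cell. At a kink the crossing polygon is labeled $(r,a,b)$ where $a$ is the under-arc color adjoining the source region and $b$ is the over-arc color; since these arcs belong to the same strand at the kink, the label is $(r,a,a)$, i.e.\ the repeated coordinates are the \emph{second and third}. This is the pattern $(x,y,y)$ of Case~$1$ in the construction of $B_{X}^{Q}X^{3}$ (the $2$-cell $e_{(r,a,a)}$ is already a sphere and is capped off by a $3$-cell), not the pattern $(x,x,y)$ of Case~$2$ (the cylinder glued along $D_{(a,a)}$ and $D_{(a*b,a*b)}$) that you invoke. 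The distinction is not cosmetic: the action quandle space $BX_{Q}^{X}$, for which the theorem is also asserted, is built by adjoining \emph{only} the Case~$1$ cells $(a,b,b)$ (its defining restriction has $\mathcal{D}^{2}=\emptyset$, so there are no disks $D_{(a,a)}$ and no Case~$2$ cylinders at all). If the Type~I null-homotopy required the Case~$2$ cells, the statement would fail for $BX_{Q}^{X}$. Relatedly, your worry about whether the region colors around the kink force a repeated coordinate is misplaced: the degeneracy comes from the coincidence of the under- and over-arc colors at the kink, with the region color $r$ remaining arbitrary, and no edge labeled $(a,a)$ is created. Replacing your Case~$2$ filling by the Case~$1$ cell bounding $e_{(r,a,a)}$ repairs the argument and brings it in line with the paper's proof.
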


\begin{proof}
For any $r,a \in X,$ there is a $3$-cell bounding the $2$-cell labeled by $(r,a,a)$ in the action quandle space $BX_{Q}^{X}$ (or the extended quandle space $B_{X}^{Q}X$) of $X.$ The homotopy class of $\psi_{X}(D_{L};\widetilde{\mathcal{C}}),$ therefore, is unchanged in $\pi_{2}(BX_{Q}^{X},r)$ (or $\pi_{2}(B_{X}^{Q}X,r)$), i.e. it is invariant under Reidemeister move of Type I (see Figure \ref{R1move}).

\begin{figure}[h]
\centerline{{\psfig{figure=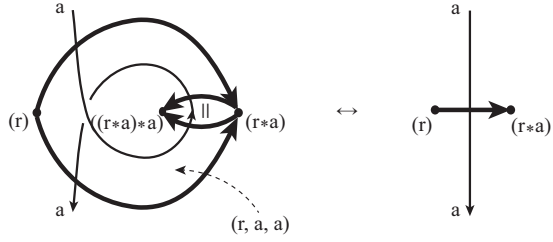,height=3.2cm}}}
\caption{The Reidemeister Type I move in $BX_{Q}^{X}$ (or $B_{X}^{Q}X$).}
\label{R1move}
\end{figure}

The homotopy class is invariant under Reidemeister move of Type II as two labeled squares corresponding to the crossings (see Figure \ref{R2move} for instance) are the same but have opposite orientation.

\begin{figure}[h]
\centerline{{\psfig{figure=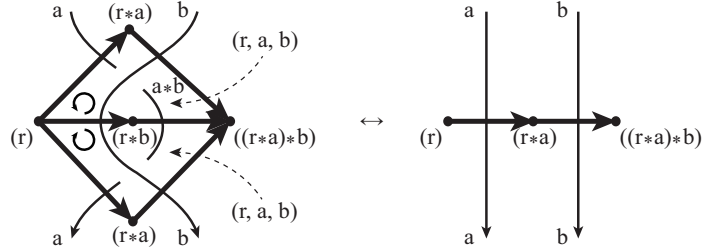,height=3.4cm}}}
\caption{The Reidemeister Type II move in $BX_{Q}^{X}$ (or $B_{X}^{Q}X$).}
\label{R2move}
\end{figure}

The union of six labeled squares which are corresponding to the six crossings in the diagrams before and after the move is the boundary of a $3$-cell in the action quandle space $BX_{Q}^{X}$ (or the extended quandle space $B_{X}^{Q}X$) of $X.$ Therefore, the homotopy class is invariant under Reidemeister move of Type III. See Figure \ref{R3move} for example.

\begin{figure}[h]
\centerline{{\psfig{figure=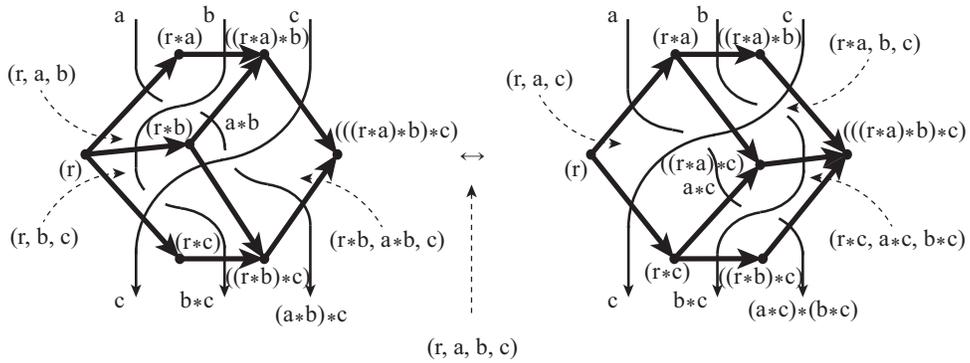,height=4.8cm}}}
\caption{The Reidemeister Type III move in $BX_{Q}^{X}$ (or $B_{X}^{Q}X$).}
\label{R3move}
\end{figure}

\end{proof}

Let $X$ be a quandle. Consider the cellular map $p:B_{X}X \rightarrow BX$ given by for any $n \geq 1,$ $p(\widetilde{e}_{(x_{1},x_{2},\ldots,x_{n+1})})=e_{(x_{2},\ldots,x_{n+1})}$ where $\widetilde{e}_{(x_{1},x_{2},\ldots,x_{n+1})}$ and $e_{(x_{2},\ldots,x_{n+1})}$ are $n$-cells labeled by $(x_{1},x_{2},\ldots,x_{n+1})$ in $B_{X}X$ and $(x_{2},\ldots,x_{n+1})$ in $BX$ respectively, and $p$ maps every $0$-cell of $B_{X}X$ to the base point of $BX.$ It is known in \cite{FenRouSan95,Nos13} that $p:B_{X}X \rightarrow BX$ and the induced map $\widetilde{p}: BX_{Q}^{X} \rightarrow B^{Q}X$ are covering maps with fibre $X.$

\begin{proposition}\cite{FenRouSan93,Nos11}
For any rack $X,$ the action of the fundamental group $\pi_{1}(BX)$ on $\pi_{2}(BX)$ is trivial. Similarly, for any quandle $X,$ the canonical action of $\pi_{1}(B^{Q}X)$ on $\pi_{2}(B^{Q}X)$ is trivial.
\end{proposition}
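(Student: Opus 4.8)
The plan is to treat the rack space $BX$ in detail; the quandle space $B^{Q}X$ is handled identically, replacing the covering $p\colon B_{X}X\to BX$ by $\widetilde{p}\colon BX_{Q}^{X}\to B^{Q}X$ recorded above. Write $\ast$ for the unique $0$-cell of $BX$ and $\ell_{a}$ for the loop given by the $1$-cell labeled $(a)$. Since the $1$-skeleton of $BX$ is the wedge of the circles $\ell_{a}$ over $a\in X$, the group $\pi_{1}(BX,\ast)$ is generated by the classes $[\ell_{a}]$; and because the canonical $\pi_{1}$-action on $\pi_{2}$ is a homomorphism $\pi_{1}(BX)\to\mathrm{Aut}(\pi_{2}(BX))$, it is enough to show that each generator $[\ell_{a}]$ acts as the identity.

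Fix $a\in X$ and a class $\xi\in\pi_{2}(BX,\ast)$, represented after cellular approximation by a cellular map $g\colon S^{2}\to BX$ whose image lies in the $2$-skeleton. First I would lift $g$ through the covering: as $S^{2}$ is simply connected, $g$ admits a lift $\widetilde{g}\colon S^{2}\to B_{X}X$, and I would choose the lift sending the basepoint of $S^{2}$ to the $0$-cell labeled $a$ in the fibre $X$. The effect of $\widetilde{g}$ is to attach to the cells of $S^{2}$ a consistent system of \emph{region labels}, i.e. their extra first coordinate: a $2$-cell $e$ with $g(e)=(x,y)$ acquires a first coordinate $r_{e}\in X$, a vertex $v$ acquires a label $s_{v}\in X$, and along every edge these labels are related by the corresponding arc label exactly as in a shadow coloring.

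With this data in hand I would build an explicit self-homotopy of $g$. For each $2$-cell $e$ of $S^{2}$, with $g(e)=(x_{e},y_{e})$ and region label $r_{e}$, I would map the prism $e\times I$ by the characteristic map of the $3$-cell $(r_{e},x_{e},y_{e})$ of $BX$, with the homotopy parameter along the first cubical coordinate. Because $d_{1}^{(*_{0})}(r_{e},x_{e},y_{e})=(x_{e},y_{e})=d_{1}^{(*)}(r_{e},x_{e},y_{e})$, both faces $e\times\{0\}$ and $e\times\{1\}$ recover $g|_{e}$, while the four side faces carry labels $(r_{e},\cdot)$ whose first coordinate twists according to the region labels. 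The consistency provided by $\widetilde{g}$ is precisely what makes neighbouring prisms agree along shared edges and makes the first-coordinate edge over each vertex $v$ equal to the single loop $\ell_{s_{v}}$. Hence the prisms glue to a well-defined map $G\colon S^{2}\times I\to BX$ satisfying $G|_{S^{2}\times\{0\}}=G|_{S^{2}\times\{1\}}=g$, whose basepoint track $G|_{\{\mathrm{pt}\}\times I}$ is the loop $\ell_{a}$. By the very definition of the action this reads $[\ell_{a}]\cdot\xi=\xi$; letting $a$ range over $X$ trivializes every generator, and with it the whole action.

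I expect the main obstacle to be the well-definedness of $G$, namely checking that the side faces of adjacent $3$-cells $(r_{e},x_{e},y_{e})$ and $(r_{e'},x_{e'},y_{e'})$ agree along a shared edge and that the first-coordinate tracks agree over a shared vertex. This is exactly where the lift $\widetilde{g}$ carries the weight: interpreting it as a global shadow-type coloring of $S^{2}$ turns the matching into the cubical identities $d_{i}^{\varepsilon}d_{j}^{\delta}=d_{j-1}^{\delta}d_{i}^{\varepsilon}$, and since $\pi_{1}(S^{2})=0$ there is no monodromy obstruction to the region labels closing up. The residual bookkeeping---the cubical orientation conventions and the degenerate cells $(r_{e},x_{e},x_{e})$, which are killed in the quandle space by the same $3$-cells that force Reidemeister-I invariance---is routine, and the quandle statement follows verbatim using $\widetilde{p}$ in place of $p$.
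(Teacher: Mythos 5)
The paper does not actually prove this proposition --- it is imported wholesale from \cite{FenRouSan93} and \cite{Nos11} --- so there is no internal proof to measure your argument against; what follows assesses it on its own terms. Your mechanism is the right one and is essentially the argument underlying the cited results: $\pi_{1}(BX)$ is generated by the loops $\ell_{a}$ because the $1$-skeleton is a wedge of circles; the action on $\pi_{2}$ is by automorphisms, so generators suffice; a lift through $p\colon B_{X}X\to BX$ (which exists because $S^{2}$ is simply connected) supplies the region labels; and the cube $(r,x,y)$, whose two $d_{1}$-faces are both the $2$-cell $(x,y)$ while the remaining faces are $(r,y)$, $(r*x,y)$, $(r,x)$, $(r*y,x*y)$, is exactly the prism realizing a self-homotopy of $g$ whose track over a vertex labeled $s_{v}$ is $\ell_{s_{v}}$.

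Two steps need repair. (i) A cellular map $S^{2}\to BX$ need not carry each $2$-cell of $S^{2}$ homeomorphically onto a $2$-cell of $BX$, so ``map the prism $e\times I$ by the characteristic map of $(r_{e},x_{e},y_{e})$'' does not parse for an arbitrary representative; you would need the Fenn--Rourke--Sanderson transversality statement that every class in $\pi_{2}(BX)$ is represented by a colored diagram. A cleaner fix, which also disposes of your well-definedness worry in one stroke, is to build the homotopy on the covering space once and for all: sending the prism over each cell $\widetilde{e}_{(r,x_{2},\ldots,x_{n+1})}$ of $B_{X}X$ into the cell $e_{(r,x_{2},\ldots,x_{n+1})}$ of $BX$, with the prism coordinate inserted as the first cubical coordinate, defines a homotopy $H\colon B_{X}X\times I\to BX$ from $p$ to $p$ (well defined by the cubical identities, since $d_{1}^{0}=d_{1}^{1}$ on $BX$) whose track at the $0$-cell $(a)$ is $\ell_{a}$; then $G=H\circ(\widetilde{g}\times\mathrm{id}_{I})$ works for every cellular $g$ with lift $\widetilde{g}$ based at $(a)$. (ii) The quandle case is not ``verbatim'': $B^{Q}X$ and $BX_{Q}^{X}$ contain extra cells coming from the mapping cones over degenerate tuples, a representative of a class in $\pi_{2}(B^{Q}X)$ may pass through them, and the homotopy $H$ must be extended over those cones (this is precisely where the additional $3$-cells over $(a,a,b)$ and $(a,b,b)$ get used). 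The extension exists, and $\pi_{1}(B^{Q}X)$ is still generated by the images of the $\ell_{a}$ since only cones are attached, but this is a genuine additional verification rather than a restatement of the rack case.
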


\begin{remark}
Notice that for a quandle $X$ the covering map $\widetilde{p}: BX_{Q}^{X} \rightarrow B^{Q}X$ induces the monomorphism $\widetilde{p}_{*}:\pi_{1}(BX_{Q}^{X},r) \rightarrow \pi_{1}(B^{Q}X,*)$ and the isomorphism $\widetilde{p}_{*}:\pi_{2}(BX_{Q}^{X},r) \rightarrow \pi_{2}(B^{Q}X,*).$ Therefore, the triviality of the action of $\pi_{1}(BX_{Q}^{X},r)$ on $\pi_{2}(BX_{Q}^{X},r)$ is inherited from $B^{Q}X.$ This justifies the omission of a base point in Definition \ref{Definition 3.4}.
\end{remark}

\begin{definition}\label{Definition 3.4}
Let $X$ be a connected quandle. If we let
$$\Psi_{X}(L)=\sum\limits_{\widetilde{\mathcal{C}} \in SCol_{X}(L)}\Psi_{X}(L;\widetilde{\mathcal{C}}) \in \mathbb{Z}[\pi_{2}(BX_{Q}^{X})],$$
where $\Psi_{X}(L;\widetilde{\mathcal{C}})$ is the homotopy class of $\psi_{X}(D_{L};\widetilde{\mathcal{C}}),$ then $\Psi_{X}(L)$ is a link invariant called the \emph{shadow homotopy invariant} of an oriented link $L.$
\end{definition}

Nosaka \cite{Nos11} defined the quandle homotopy invariant of an oriented link $L$ in a state-sum form, denoted by $\Xi_{X}(L)=\sum\limits_{\mathcal{C} \in Col_{X}(L)}\Xi_{X}(L;\mathcal{C})$ in $\mathbb{Z}[\pi_{2}(B^{Q}X)].$ Notice that for every $r \in X,$ the homotopy class of the composition $\xi_{X}(D_{L};\mathcal{C})=\widetilde{p} \circ \psi_{X}(D_{L};\widetilde{\mathcal{C}})$ coincides with $\Xi_{X}(L;\mathcal{C}).$ See the following commutative diagram:
\begin{diagram}
 & &(BX_{Q}^{X},r)\\
 & \ruTo^{\psi_{X}(D_{L};\widetilde{\mathcal{C}})} &\dTo_{\widetilde{p}}\\
(I^{2}, \partial I^{2}) &\rTo_{\xi_{X}(D_{L};\mathcal{C})} &(B^{Q}X,*).
\end{diagram}
The covering map $\widetilde{p}:(BX_{Q}^{X},r) \rightarrow (B^{Q}X,*)$ above induces the isomorphism $\widetilde{p}_{*}:\pi_{2}(BX_{Q}^{X},r) \rightarrow \pi_{2}(B^{Q}X,*)$ and $BX_{Q}^{X}$ is path-connected if $X$ is connected by Proposition \ref{Proposition 2.3}. Therefore, we have:

\begin{theorem}\label{Theorem 3.2}
Let $X$ be a finite connected quandle, and let $\Xi_{X}(L)$ be the quandle homotopy invariant of an oriented link $L.$ Then
$$\Psi_{X}(L)=|X|\Xi_{X}(L).$$
\end{theorem}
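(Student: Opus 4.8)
The plan is to count how the shadow colorings fiber over the quandle colorings via the covering map $\widetilde{p}$, and to show that each fiber contributes the same homotopy class, so that the sum defining $\Psi_X(L)$ collapses to $|X|$ copies of the sum defining $\Xi_X(L)$. Concretely, I would begin by recalling the classical observation already cited in the introduction: for a finite quandle $X$, every quandle coloring $\mathcal{C} \in \mathrm{Col}_X(D_L)$ extends to exactly $|X|$ shadow colorings, since the color of a single region (say any fixed region adjacent to the choice made in the decomposition) can be chosen arbitrarily in $X$ and this choice determines the colors of all remaining regions uniquely. Thus there is a surjection $\mathrm{SCol}_X(D_L) \to \mathrm{Col}_X(D_L)$ sending $\widetilde{\mathcal{C}}$ to its underlying arc-coloring $\mathcal{C}$, and each fiber has cardinality exactly $|X|$. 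This is the combinatorial backbone of the argument.

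The key geometric step is to identify, for each shadow coloring $\widetilde{\mathcal{C}}$ lying over a given $\mathcal{C}$, the relationship between $\Psi_X(L;\widetilde{\mathcal{C}})$ and $\Xi_X(L;\mathcal{C})$. Here I would invoke the commutative diagram displayed just before the statement: by construction the composite $\widetilde{p} \circ \psi_X(D_L;\widetilde{\mathcal{C}})$ realizes $\xi_X(D_L;\mathcal{C})$, whose homotopy class is $\Xi_X(L;\mathcal{C})$. Since $\widetilde{p}\colon BX_Q^X \to B^Q X$ is a covering map inducing the isomorphism $\widetilde{p}_*\colon \pi_2(BX_Q^X,r) \xrightarrow{\sim} \pi_2(B^Q X,*)$, the class $\Psi_X(L;\widetilde{\mathcal{C}})$ is precisely the image of $\Xi_X(L;\mathcal{C})$ under $\widetilde{p}_*^{-1}$. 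Crucially, this image depends only on $\mathcal{C}$, not on the particular lift $\widetilde{\mathcal{C}}$: because $X$ is connected the total space $BX_Q^X$ is path-connected by Proposition~\ref{Proposition 2.3}, so the base-point $r$ in $\pi_2(BX_Q^X,r)$ can be chosen freely and the identification $\pi_2(BX_Q^X,r) \cong \pi_2(B^Q X,*)$ is canonical; under the standing identification via $\widetilde{p}_*$, all $|X|$ lifts of $\mathcal{C}$ yield the same element $\widetilde{p}_*^{-1}(\Xi_X(L;\mathcal{C}))$ of $\pi_2(BX_Q^X)$.

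Granting these two steps, the computation is a direct reindexing of the state sum. I would write
\begin{align*}
\Psi_X(L) &= \sum_{\widetilde{\mathcal{C}} \in \mathrm{SCol}_X(L)} \Psi_X(L;\widetilde{\mathcal{C}})
= \sum_{\mathcal{C} \in \mathrm{Col}_X(L)} \ \sum_{\widetilde{\mathcal{C}} \mapsto \mathcal{C}} \widetilde{p}_*^{-1}\bigl(\Xi_X(L;\mathcal{C})\bigr) \\
&= \sum_{\mathcal{C} \in \mathrm{Col}_X(L)} |X| \cdot \widetilde{p}_*^{-1}\bigl(\Xi_X(L;\mathcal{C})\bigr)
= |X| \sum_{\mathcal{C} \in \mathrm{Col}_X(L)} \Xi_X(L;\mathcal{C})
= |X|\,\Xi_X(L),
\end{align*}
where the second equality partitions the shadow colorings by their underlying arc-coloring, the third uses that each fiber has exactly $|X|$ elements all contributing the same class, and the last identifies $\mathbb{Z}[\pi_2(BX_Q^X)]$ with $\mathbb{Z}[\pi_2(B^Q X)]$ via the ring isomorphism induced by $\widetilde{p}_*$.

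The main obstacle I anticipate is the well-definedness and base-point independence underlying the second step — namely, verifying that $\widetilde{p}_*^{-1}(\Xi_X(L;\mathcal{C}))$ really is independent of the lift $\widetilde{\mathcal{C}}$, and more subtly that the comparison of invariants valued in $\mathbb{Z}[\pi_2(BX_Q^X)]$ and $\mathbb{Z}[\pi_2(B^Q X)]$ is legitimate. This requires the triviality of the $\pi_1$-action on $\pi_2$ (recorded in the Proposition above and transferred along $\widetilde{p}$ in the subsequent remark), which guarantees that the homotopy class $\Psi_X(L;\widetilde{\mathcal{C}})$ is genuinely base-point free and that different lifts, which change the base point $r$, do not alter the resulting element. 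Once this action-triviality is in hand, the rest is the purely formal reindexing above, and the parallel with the classical cardinality identity $|\mathrm{SCol}_X(L)| = |X|\,|\mathrm{Col}_X(L)|$ becomes exact.
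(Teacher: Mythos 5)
Your proposal is correct and follows essentially the same route as the paper: the paper derives the theorem directly from the commutative diagram relating $\psi_{X}(D_{L};\widetilde{\mathcal{C}})$ and $\xi_{X}(D_{L};\mathcal{C})$ via the covering map $\widetilde{p}$, the induced isomorphism on $\pi_{2}$, the path-connectedness from Proposition~\ref{Proposition 2.3}, and the fact that each quandle coloring admits exactly $|X|$ shadow extensions. Your write-up simply makes explicit the reindexing of the state sum and the base-point independence (via the triviality of the $\pi_{1}$-action on $\pi_{2}$) that the paper leaves implicit in its ``Therefore, we have'' derivation.
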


\begin{remark}
Nosaka \cite{Nos11} proved that $\pi_{2}(B^{Q}X)$ is finite if $X$ is finite and connected. Thus for a finite connected quandle $X,$ $\pi_{2}(BX_{Q}^{X})$ is a finite abelian group.
\end{remark}

A formula of the quandle homotopy invariant for the connected sum was obtained in \cite{Nos11}.

\begin{theorem}\cite{Nos11}\label{Theorem 3.4}
Let $X$ be a faithful finite connected quandle. Then for knots $K_{1}$ and $K_{2},$
$$\Xi_{X}(K_{1} \sharp K_{2})=\frac{1}{|X|}\Xi_{X}(K_{1})\Xi_{X}(K_{2})$$
where $K_{1} \sharp K_{2}$ denotes the connected sum of $K_{1}$ and $K_{2}.$
\end{theorem}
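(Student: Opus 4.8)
The plan is to reduce the identity to two ingredients: a \emph{connected-sum additivity} for the homotopy class attached to a single coloring, and a \emph{base-color independence} of the partial state sums, and then to let the factor $\frac{1}{|X|}$ emerge from a counting mismatch between ``matching'' and ``all'' pairs of colorings. Throughout I write $\pi_{2}(B^{Q}X)$ additively and recall that in the group ring $\mathbb{Z}[\pi_{2}(B^{Q}X)]$ the product of two group elements is their sum in $\pi_{2}$. For a knot $K$ with a chosen arc $\alpha$, let $\mathrm{Col}_{X}(K;a)$ denote the colorings sending $\alpha$ to $a$, and set $\Xi_{X}(K;a)=\sum_{\mathcal{C}\in\mathrm{Col}_{X}(K;a)}\Xi_{X}(K;\mathcal{C})\in\mathbb{Z}[\pi_{2}(B^{Q}X)]$, so that $\Xi_{X}(K)=\sum_{a\in X}\Xi_{X}(K;a)$.

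First I would fix a convenient diagram of $K_{1}\sharp K_{2}$: draw $D_{K_{1}}$ and $D_{K_{2}}$ in disjoint disks and perform the connected sum \emph{through the $\infty$-region}, joining the cut arcs of $K_{1}$ and $K_{2}$ by two parallel, oppositely oriented strands carrying no new crossings. A coloring of $K_{1}\sharp K_{2}$ is then exactly a pair $(\mathcal{C}_{1},\mathcal{C}_{2})$ whose cut-arc colors agree, so that $\mathrm{Col}_{X}(K_{1}\sharp K_{2})\cong\coprod_{a\in X}\mathrm{Col}_{X}(K_{1};a)\times\mathrm{Col}_{X}(K_{2};a)$. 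For additivity I would split the square $I^{2}$ by a vertical line running through the $\infty$-region and crossing the two connecting strands, both colored $a$. Under $\xi_{X}(D_{K_{1}\sharp K_{2}};\mathcal{C})$ this dividing line maps into $B^{Q}X$ along the $1$-cell $e_{a}$ and then along $e_{a}$ traversed in the opposite direction, hence to a null-homotopic loop; collapsing it to the base point exhibits $\xi_{X}(D_{K_{1}\sharp K_{2}};\mathcal{C})$ as the concatenation of the two halves. This gives $\Xi_{X}(K_{1}\sharp K_{2};\mathcal{C})=\Xi_{X}(K_{1};\mathcal{C}_{1})+\Xi_{X}(K_{2};\mathcal{C}_{2})$ in $\pi_{2}(B^{Q}X)$, and summing over colorings yields $\Xi_{X}(K_{1}\sharp K_{2})=\sum_{a\in X}\Xi_{X}(K_{1};a)\,\Xi_{X}(K_{2};a)$.

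Next I would prove base-color independence: $\Xi_{X}(K;a)=\Xi_{X}(K;b)$ for all $a,b\in X$. Since $X$ is connected, $\mathrm{Inn}(X)$ acts transitively, so there is $\phi\in\mathrm{Inn}(X)$ with $\phi(a)=b$; postcomposition $\mathcal{C}\mapsto\phi\circ\mathcal{C}$ is a bijection $\mathrm{Col}_{X}(K;a)\to\mathrm{Col}_{X}(K;b)$ and $\Xi_{X}(K;\phi\circ\mathcal{C})=(B^{Q}\phi)_{*}\Xi_{X}(K;\mathcal{C})$. It therefore suffices to show that an inner automorphism induces the identity on $\pi_{2}(B^{Q}X)$. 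For $\phi=*_{c}$ the quandle structure supplies, at each vertex, the edge $e_{c}$ from $x$ to $x*c$, assembling into a free homotopy $B^{Q}(*_{c})\simeq\mathrm{id}$ whose trace is the loop $[e_{c}]$; hence $(B^{Q}*_{c})_{*}$ equals the action of $[e_{c}]\in\pi_{1}(B^{Q}X)$ on $\pi_{2}(B^{Q}X)$, which is trivial by the proposition above. This is where faithfulness of $X$ enters: it guarantees that the inner automorphisms, equivalently the monodromy of the covering $\widetilde{p}\colon BX_{Q}^{X}\to B^{Q}X$, faithfully record the base-color change, so that the identification of the partial sums $\Xi_{X}(K;a)$ across all $a$ is canonical and the resulting class $\Xi_{X}^{0}(K):=\Xi_{X}(K;a)$ is well defined independent of $a$.

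Finally I would combine the two ingredients. With $\Xi_{X}(K;a)=\Xi_{X}^{0}(K)$ independent of $a$, additivity gives $\Xi_{X}(K_{1}\sharp K_{2})=\sum_{a\in X}\Xi_{X}^{0}(K_{1})\Xi_{X}^{0}(K_{2})=|X|\,\Xi_{X}^{0}(K_{1})\Xi_{X}^{0}(K_{2})$, while $\Xi_{X}(K_{1})\Xi_{X}(K_{2})=\bigl(\sum_{a}\Xi_{X}^{0}(K_{1})\bigr)\bigl(\sum_{b}\Xi_{X}^{0}(K_{2})\bigr)=|X|^{2}\,\Xi_{X}^{0}(K_{1})\Xi_{X}^{0}(K_{2})$; dividing gives the claimed formula. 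The main obstacle I anticipate is the base-color-independence step, specifically making rigorous that an inner automorphism acts trivially on $\pi_{2}(B^{Q}X)$ and pinning down exactly how faithfulness is needed rather than merely sufficient — the geometric decomposition in the additivity step is routine once the connected sum is drawn through the $\infty$-region, but the homotopy $B^{Q}(*_{c})\simeq\mathrm{id}$ and the reduction of $(B^{Q}*_{c})_{*}$ to the trivial $\pi_{1}$-action on $\pi_{2}(B^{Q}X)$ require care, and this is the delicate point on which the whole $\frac{1}{|X|}$ normalization rests.
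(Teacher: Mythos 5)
The paper itself offers no proof of this theorem; it is quoted from Nosaka \cite{Nos11}. Judged on its own merits, your proposal has the right skeleton (split the colorings of the connected sum along the cut arcs, show the partial sums $\Xi_{X}(K;a)$ do not depend on $a$, then count), and your base-color-independence step is essentially correct: transporting colorings by an inner automorphism and showing that $(B^{Q}*_{c})_{*}$ is the identity on $\pi_{2}(B^{Q}X)$, via the prism cells $e_{(\mathbf{x},c)}$ and the triviality of the $\pi_{1}$-action on $\pi_{2}(B^{Q}X)$, uses only that $X$ is connected.

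The genuine gap is in your first step, and it is precisely the point where faithfulness must be used — whereas you invoke faithfulness in the second step, where it plays no role. The identification $\mathrm{Col}_{X}(K_{1}\sharp K_{2})\cong\coprod_{a}\mathrm{Col}_{X}(K_{1};a)\times\mathrm{Col}_{X}(K_{2};a)$ is not automatic. Gluing two colorings with matching cut-arc colors always produces a coloring of the sum, but surjectivity can fail: a coloring of the connected-sum diagram restricts on the $K_{1}$-side to a coloring of the $1$-tangle (long-knot) diagram of $K_{1}$, whose two end arcs may a priori carry \emph{different} colors $a$ and $a'$; the crossing-free connecting strands only force these to agree with the end colors of the $K_{2}$-tangle, not with each other. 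What is always true is that the coloring induces a representation of the knot group to $\mathrm{Inn}(X)$ sending the initial meridian to $*_{a}$ and the terminal meridian to $*_{a'}=\Lambda *_{a}\Lambda^{-1}$, where $\Lambda$ is the image of the longitude; since meridian and longitude commute, $*_{a'}=*_{a}$ in $\mathrm{Inn}(X)$. Faithfulness is exactly what upgrades $*_{a'}=*_{a}$ to $a'=a$, so that every coloring of $K_{1}\sharp K_{2}$ splits as a matching pair. Without this lemma your additivity step also collapses (the dividing line through the $\infty$-region would cross strands colored $a$ and $a'$, and the two halves would not be maps attached to colorings of the closed knots), and the formula itself — already at the level of the count $|\mathrm{Col}_{X}(K_{1}\sharp K_{2})|=\frac{1}{|X|}|\mathrm{Col}_{X}(K_{1})||\mathrm{Col}_{X}(K_{2})|$ — can fail for non-faithful quandles. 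The sentence in which you claim faithfulness guarantees that inner automorphisms ``faithfully record the base-color change'' is not a mathematical argument and should be deleted; that step needs only connectedness and the cited triviality of the $\pi_{1}$-action on $\pi_{2}(B^{Q}X)$, both of which you already have.
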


The following formula of the shadow homotopy invariant for the connected sum of two knots is immediate from Theorem \ref{Theorem 3.2} and Theorem \ref{Theorem 3.4}.

\begin{corollary}
For a faithful finite connected quandle $X,$ we have
$$|X|^{2}\Psi_{X}(K_{1} \sharp K_{2})=\Psi_{X}(K_{1})\Psi_{X}(K_{2})$$
where $K_{1}$ and $K_{2}$ are knots, and $K_{1} \sharp K_{2}$ is their connected sum.
\end{corollary}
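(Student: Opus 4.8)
The plan is to derive the identity purely algebraically by substituting the two cited results; the computation itself is a one-line chain of substitutions, so the only point that genuinely requires attention is the bookkeeping of where each quantity lives, since the products are taken in a group ring and $\Psi_X$ and $\Xi_X$ a priori belong to different rings.

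First I would invoke Theorem \ref{Theorem 3.2} to rewrite every occurrence of the shadow homotopy invariant in terms of the quandle homotopy invariant. Because $X$ is faithful, finite, and connected, Theorem \ref{Theorem 3.2} applies to each of the three knots $K_1$, $K_2$, and $K_1 \sharp K_2$, giving $\Psi_X(K_i)=|X|\,\Xi_X(K_i)$ and $\Psi_X(K_1 \sharp K_2)=|X|\,\Xi_X(K_1 \sharp K_2)$. Multiplying the last equality by $|X|^2$ yields $|X|^2\Psi_X(K_1 \sharp K_2)=|X|^3\,\Xi_X(K_1 \sharp K_2)$. I would then apply the connected-sum formula of Theorem \ref{Theorem 3.4}, valid precisely because $X$ is a faithful finite connected quandle, to replace $\Xi_X(K_1 \sharp K_2)$ by $\tfrac{1}{|X|}\Xi_X(K_1)\Xi_X(K_2)$, obtaining $|X|^2\Psi_X(K_1 \sharp K_2)=|X|^3\cdot\tfrac{1}{|X|}\,\Xi_X(K_1)\Xi_X(K_2)=|X|^2\,\Xi_X(K_1)\Xi_X(K_2)$. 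On the other hand, expanding the right-hand side of the claim with Theorem \ref{Theorem 3.2} gives $\Psi_X(K_1)\Psi_X(K_2)=\bigl(|X|\Xi_X(K_1)\bigr)\bigl(|X|\Xi_X(K_2)\bigr)=|X|^2\,\Xi_X(K_1)\Xi_X(K_2)$, so the two sides coincide.

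The step I expect to need the most care is not the arithmetic but identifying the ambient ring. The invariant $\Xi_X(L)$ lies in $\mathbb{Z}[\pi_2(B^QX)]$, whereas $\Psi_X(L)$ lies in $\mathbb{Z}[\pi_2(BX_Q^X)]$, and the equality of Theorem \ref{Theorem 3.2} is implicitly read through the isomorphism $\widetilde{p}_*:\pi_2(BX_Q^X)\to\pi_2(B^QX)$ recorded in the preceding remark. I would therefore transport every term into the single ring $\mathbb{Z}[\pi_2(B^QX)]$ via the ring isomorphism induced by $\widetilde{p}_*$, noting that the integer $|X|$ is a central scalar and hence commutes with every group-ring element, so that the multiplicative formula of Theorem \ref{Theorem 3.4} may be substituted without ambiguity. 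Once this common ring is fixed, the argument reduces to the short chain of substitutions above.
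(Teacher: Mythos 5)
Your proposal is correct and is exactly the derivation the paper intends: the corollary is stated as immediate from Theorem \ref{Theorem 3.2} and Theorem \ref{Theorem 3.4}, and your chain of substitutions carries this out. The extra care you take in transporting everything into $\mathbb{Z}[\pi_{2}(B^{Q}X)]$ via $\widetilde{p}_{*}$ is a sensible precaution that the paper leaves implicit.
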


\section{Acknowledgements}
The author would like to express his sincere appreciation to J\'ozef H. Przytycki for his friendly guidance, generous advice and constant encouragement. He is grateful to Takefumi Nosaka for valuable conversations on quandle spaces.

\ \\ \ \\
Seung Yeop Yang\\
Department of Mathematics,\\
The George Washington University,\\
Washington, DC 20052\\
e-mail: {\tt syyang@gwu.edu}

\end{document}